\title{Covering Numbers in Linear Algebra}
\author{Pete L. Clark}
\begin{document}

\maketitle

\newtheorem{lemma}{Lemma}
\newtheorem{prop}[lemma]{Proposition}
\newtheorem{cor}[lemma]{Corollary}
\newtheorem{thm}[lemma]{Theorem}
\newtheorem{example}[lemma]{Example}
\newtheorem{thm?}[lemma]{Theorem?}
\newtheorem{schol}[lemma]{Scholium}
\newtheorem{ques}{Question}
\newtheorem{conj}[lemma]{Conjecture}
\newtheorem*{mainthm}{Main Theorem}
\newtheorem{prob}{Problem}

\newcommand{\Z}{\mathbb{Z}}
\newcommand{\R}{\mathbb{R}}

\renewcommand{\dim}{\operatorname{dim}}
\newcommand{\LC}{\operatorname{LC}}
\newcommand{\AC}{\operatorname{AC}}
\newcommand{\ILC}{\operatorname{ILC}}
\newcommand{\IAC}{\operatorname{IAC}}

\begin{abstract}
We compute the minimal cardinality of coverings and irredundant coverings of a vector space over an arbitrary 
field by proper linear subspaces.  Analogues for affine linear subspaces are also given.  
\end{abstract}

\noindent
Notation: The cardinality of a set $S$ will be denoted by $\# S$.  For a vector space $V$ over a field $K$, we denote its dimension -- i.e., the cardinality of any $K$-basis of $V$ -- 
by $\dim K$.  We emphasize that $\dim V$ is a possibly infinite cardinal number: infinite-dimensional vector spaces are allowed. 

\section{Linear coverings}
\noindent
Let $V$ be a vector space over a field $K$.  A \textbf{linear covering} 
of $V$ is a collection $\{W_i\}_{i \in I}$ of proper $K$-subspaces such that $V = \bigcup_{i \in I} W_i$.  A linear 
covering is \textbf{irredundant} if for all $J \subsetneq I$, $\bigcup_{i \in J} W_i \neq V$.  Linear coverings exist if and only if $\dim V \geq 2$.  
\\ \\
The \textbf{linear covering number} $\LC(V)$ of a vector space $V$ of dimension at least $2$ is the least cardinality $\# I$ of a linear covering $\{W_i\}_{i \in I}$ of $V$.  The \textbf{irredundant linear covering number} $\ILC(V)$ is 
the least cardinality of an irredundant linear covering of $V$.  Thus $\LC(V) \leq \ILC(V)$.  
\\ \\
The main result of this note is a computation of $\LC(V)$ and $\ILC(V)$.  
\begin{mainthm} Let $V$ be a vector space over a field $K$, with $\dim V \geq 2$. \\
a) If $\dim V$ and $\#K$ are not both infinite, then $\LC(V) = \# K  +1$.  \\
b) If $\dim V$ and $\# K$ are both infinite, then $\LC(V) = \aleph_0$. \\
c) In all cases we have $\ILC(V) = \# K + 1$. 
\end{mainthm}
\noindent
Here is a counterintuitive consequence: the vector space $\R[t]$ of polynomials has a countably infinite linear 
covering -- indeed, for each $n \in \Z^+$, let $W_n$ be the subspace of polynomials of degree at most $n$.  However any 
irredundant linear covering of $\R[t]$ has cardinality $\# \R + 1 = 2^{\aleph_0}$.  Redundant coverings can be much more 
efficient!
\\ \\
The fact that a finite-dimensional vector space over an infinite field cannot be a finite union of proper linear subspaces 
is part of the mathematical folkore: the problem and its solution appear many times in the literature.  For instance problem 10707 in this \textsc{Monthly} is intermediate between this fact and our main result.  The editorial comments given on page 951 of the December 2000 issue of the \textsc{Monthly} give references to variants of this fact dating back to $1959$.  Like many pieces of folklore, there seems to a be mild stigma against putting it in standard texts; an exception is \cite[Thm. 1.2]{Roman}.  
\\ \indent
There are two essentially different arguments that establish this fact.  Upon examination, each of these yields a stronger result, recorded as Theorem \ref{THMA} and Theorem \ref{THMB} below.  From these two results the Main Theorem follows easily. \\ \indent The first two parts of the Main Theorem have appeared in the literature before (but only very recently!): they were shown by 
A. Khare \cite{Khare1}.  I found these results independently in the summer of 2008.  The computation of the irredundant linear covering number appears to be new.  The proof of the Main Theorem is given in $\S 2$.  \\ \indent
There is an analogous result for coverings of a vector space by affine linear subspaces.  This is stated in $\S 3$; we then briefly discuss what modifications must be made in the proof of the Main Theorem to obtain this affine analogue.  
\section{Proof of the Main Theorem}
\noindent
First we prove three lemmas, all consequences or special cases of the Main Theorem.  

\begin{lemma}(Quotient Principle)
\label{QUOTIENTPRINCIPLE}
\label{LEMMA0}
Let $V$ and $W$ be vector spaces over a field $K$ with $\dim V \geq \dim W \geq 2$.  Then $\LC(V) \leq \LC(W)$ and 
$\ILC(V) \leq \ILC(W)$.  
\end{lemma}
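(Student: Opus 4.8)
The plan is to deduce both inequalities from a single structural observation: if $\dim V \geq \dim W$, then $W$ is isomorphic to a quotient of $V$. Indeed, choosing bases, we may write $V = U \oplus V'$ where $V' \cong W$ (here we use that a subset of a basis of $V$ of size $\dim W$ spans a subspace isomorphic to $W$, and the remaining basis vectors span a complement $U$). Equivalently, there is a surjective $K$-linear map $\pi \colon V \twoheadrightarrow W$. The strategy is then to pull back a covering of $W$ along $\pi$.

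First I would take an arbitrary linear covering $\{W_i\}_{i \in I}$ of $W$ and set $V_i := \pi^{-1}(W_i)$. Each $V_i$ is a $K$-subspace of $V$, and it is proper because $\pi$ is surjective: if $V_i = V$ then $W_i = \pi(V_i) = W$, contradicting properness of $W_i$. Since $\bigcup_i W_i = W$ and $\pi$ is surjective (so every $v \in V$ has $\pi(v) \in W_i$ for some $i$, whence $v \in V_i$), the family $\{V_i\}_{i \in I}$ covers $V$. This gives a linear covering of $V$ indexed by the same set $I$, so $\LC(V) \leq \# I$; taking the infimum over all coverings of $W$ yields $\LC(V) \leq \LC(W)$.

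For the irredundant statement I would start from an irredundant linear covering $\{W_i\}_{i \in I}$ of $W$ and again form $V_i = \pi^{-1}(W_i)$. The covering property is as above; the point is to check irredundancy. If some $J \subsetneq I$ had $\bigcup_{i \in J} V_i = V$, then applying $\pi$ and using surjectivity gives $\bigcup_{i \in J} W_i = \pi\bigl(\bigcup_{i \in J} V_i\bigr) = \pi(V) = W$, contradicting irredundancy of the original covering of $W$. Hence $\{V_i\}_{i \in I}$ is an irredundant linear covering of $V$, giving $\ILC(V) \leq \# I$ and then $\ILC(V) \leq \ILC(W)$ upon taking the infimum.

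The only genuinely substantive point is the preliminary reduction — producing the surjection $\pi \colon V \twoheadrightarrow W$, i.e. realizing $W$ as a quotient of $V$ when $\dim V \geq \dim W$ — and this is a routine basis argument (extend a basis of a copy of $W$ inside $V$, or map a basis of $V$ onto a basis of $W$ together with zeros). Everything after that is the formal verification that preimages of proper subspaces under a surjection are proper, that preimages of a cover form a cover, and that $\pi$ carries a subcover of the $V_i$ back to a subcover of the $W_i$. I do not anticipate any real obstacle; the lemma is essentially bookkeeping once the quotient presentation is in hand, and the hypothesis $\dim W \geq 2$ is only needed to guarantee that linear coverings of $W$ (and hence the quantities $\LC(W)$, $\ILC(W)$) exist.
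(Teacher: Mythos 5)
Your argument is correct and is essentially the paper's own proof: produce a surjection $\pi \colon V \twoheadrightarrow W$ from the dimension hypothesis, pull back a (respectively irredundant) linear covering of $W$ to preimages in $V$, and check properness, covering, and irredundancy exactly as you do. The paper states these verifications in one line; you have merely written them out in full.
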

\begin{proof}
By standard linear algebra, the hypothesis implies that there is a surjective linear map $q: V \rightarrow W$.  If 
$\{W_i\}_{i \in I}$ is a linear covering of $W$, then the complete preimages $\{q^{-1}(W_i)\}_{i \in I}$ give a linear 
covering of $V$.  The preimage of an irredundant covering is easily seen to be irredundant.  
\end{proof}

\begin{lemma}
\label{LEMMA1}
For any field $K$, the unique linear covering of $K^2$ is the set of all lines through the origin, of cardinality $\# K + 1$.  It is an irredundant covering.  
\end{lemma}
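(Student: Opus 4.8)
The plan is to exploit the fact that the proper subspaces of $K^2$ are completely understood: besides the zero subspace, they are exactly the one-dimensional subspaces, i.e. the lines through the origin. First I would record the key observation that a nonzero vector $v \in K^2$ lies in exactly one proper subspace of positive dimension, namely $Kv$. Consequently, if $\{W_i\}_{i \in I}$ is any linear covering of $K^2$, then for each $v \neq 0$ some $W_i$ contains $v$, and the only proper subspace that does so is $Kv$; hence $Kv$ occurs among the $W_i$. Letting $v$ range over all nonzero vectors shows that every line through the origin must occur in the covering. Conversely, the collection of all lines through the origin is itself a linear covering, since $0$ lies on each of them and every nonzero vector lies on its own line. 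Thus every linear covering of $K^2$ contains, as members, precisely the lines through the origin — the only ambiguity being whether the zero subspace is also adjoined, and since the zero subspace is contained in each line, adjoining it merely produces a redundant covering. This establishes that the set of all lines through the origin is the unique linear covering (irredundant, and essentially unique even among redundant ones).

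Next I would compute its cardinality. Using coordinates $(x,y)$ on $K^2$, the lines through the origin are the vertical axis $\{x = 0\}$ together with the lines $\{y = mx\}$ as $m$ ranges over $K$, and distinct values of $m$ give distinct lines. This exhibits an explicit bijection between the set of lines and $K \sqcup \{\ast\}$ (equivalently, with the projective line over $K$), so there are exactly $\# K + 1$ of them. I would emphasize that this count is uniform: it holds verbatim whether $K$ is finite or infinite, which is precisely what is needed for the Main Theorem.

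Finally, for irredundancy, I would argue pointwise: for each line $\ell$ through the origin, choose a nonzero vector $v_\ell \in \ell$; since $\ell$ is the unique line containing $v_\ell$, this vector lies in no other member of the covering, so deleting $\ell$ leaves $v_\ell$ uncovered. Hence no proper subcollection of the set of all lines covers $K^2$.

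I do not expect a serious obstacle here — the lemma is elementary and is intended as a base case feeding into the Quotient Principle (Lemma \ref{QUOTIENTPRINCIPLE}). The only points deserving a little care are the uniform cardinality count over an arbitrary, possibly infinite, field $K$, and the small amount of bookkeeping around the zero subspace in the uniqueness statement.
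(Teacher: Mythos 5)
Your proposal is correct and follows essentially the same route as the paper: every nonzero vector lies on the unique line $Kv$, so all lines must appear in any covering (hence uniqueness and irredundancy), and the count $\#K+1$ comes from the lines $y=\alpha x$ for $\alpha\in K$ together with $x=0$. The extra bookkeeping you include about the zero subspace and the explicit bijection with $K\sqcup\{\ast\}$ is fine but not needed beyond what the paper records.
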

\begin{proof} The set of lines through the origin is a linear covering of $K^2$.  Moreover, any nonzero $v \in K^2$ lies on a unique line, so all lines are needed.  The lines through the origin are 
$\{y = \alpha x \ | \ \alpha \in K\}$ and $x = 0$, so there are $\# K + 1$ of them.
\end{proof} 

\begin{lemma}
\label{LEMMA1.5}
Let $V$ be a vector space over a field $K$, with $\dim V \geq 2$.  Then there are at least $\# K + 1$ hyperplanes -- 
i.e., codimension-one linear subspaces -- in $V$.
\end{lemma}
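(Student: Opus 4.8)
The plan is to reduce to the two-dimensional case settled in Lemma \ref{LEMMA1} by transporting hyperplanes along a linear surjection, much in the spirit of the Quotient Principle (Lemma \ref{QUOTIENTPRINCIPLE}). Since $\dim V \geq 2 = \dim K^2$, standard linear algebra provides a surjective $K$-linear map $q \colon V \rightarrow K^2$. I will produce, for each line through the origin in $K^2$, a corresponding hyperplane in $V$, and then check that distinct lines yield distinct hyperplanes.

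First I would verify that for every one-dimensional subspace $L$ of $K^2$, the preimage $q^{-1}(L)$ is a hyperplane in $V$. To see this, compose $q$ with the projection $K^2 \rightarrow K^2/L$: the resulting map $V \rightarrow K^2/L$ is surjective, being a composite of surjections, and its kernel is exactly $q^{-1}(L)$. Hence $V/q^{-1}(L) \cong K^2/L$, which is one-dimensional; so $q^{-1}(L)$ has codimension one in $V$ (and is in particular a proper subspace).

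Next I would observe that the assignment $L \mapsto q^{-1}(L)$ is injective: applying the surjection $q$ to $q^{-1}(L)$ recovers $L$, so $q^{-1}(L_1) = q^{-1}(L_2)$ forces $L_1 = L_2$. Therefore $V$ has at least as many hyperplanes as there are lines through the origin in $K^2$, and by Lemma \ref{LEMMA1} the latter number is $\# K + 1$. This finishes the argument.

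I do not expect any real obstacle here; the only step carrying genuine content is the codimension computation, i.e.\ confirming that the preimage of a line is a hyperplane and not merely some proper subspace. Everything else is formal manipulation with surjections, and the statement becomes essentially immediate once one decides to work through a quotient onto $K^2$ rather than directly inside $V$.
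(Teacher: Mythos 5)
Your argument is correct and is essentially the paper's own proof: both pull back the $\# K + 1$ lines of $K^2$ along a surjection $q \colon V \rightarrow K^2$, with your write-up simply spelling out the codimension-one and injectivity checks that the paper leaves implicit.
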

\begin{proof} When $\dim V = 2$ there are exactly $\# K + 1$ hyperplanes $\{L_i\}$.  In general, take a surjective linear map $q: V \rightarrow K^2$; then $\{q^{-1}(L_i)\}$ is a family of distinct hyperplanes in $V$.
\end{proof}
\noindent
The exact number of hyperplanes in $V$ is of course known, but not needed here.

\begin{thm}
\label{THMA}
Let $V$ be a finite-dimensional vector space over a field $K$, and let $\{W_i\}_{i \in I}$ be a linear covering of 
$V$.  Then $\# I \geq \# K + 1$.
\end{thm}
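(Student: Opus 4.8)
The plan is to prove the statement by induction on $n = \dim V$, using the two preceding lemmas. The base case $n = 2$ is immediate from Lemma \ref{LEMMA1}: there every linear covering is forced to be the set of all lines through the origin, so its index set has cardinality at least $\#K + 1$.

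For the inductive step, let $n \geq 3$ and suppose, seeking a contradiction, that $\#I < \#K + 1$. I would like to intersect the covering with a hyperplane $H$ of $V$ and invoke the inductive hypothesis on $H$, which has dimension $n - 1 \geq 2$. The one thing that could obstruct this is a subspace $W_i$ with $H \subseteq W_i$; but such a $W_i$ is proper and satisfies $\dim W_i \leq n - 1 = \dim H$, so in fact $W_i = H$. Hence it suffices to find a hyperplane $H$ of $V$ that is not equal to any $W_i$. This is exactly what a cardinality count provides: by Lemma \ref{LEMMA1.5} the space $V$ has at least $\#K + 1$ hyperplanes, whereas the collection of those $W_i$ that happen to be hyperplanes has cardinality at most $\#I < \#K + 1$. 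So some hyperplane $H$ of $V$ is omitted.

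Fixing such an $H$, the previous paragraph shows that $W_i \cap H$ is a proper subspace of $H$ for every $i \in I$, and intersecting $V = \bigcup_{i \in I} W_i$ with $H$ gives $H = \bigcup_{i \in I}(W_i \cap H)$, a linear covering of $H$ indexed by $I$. Since $\dim H = n - 1 \geq 2$, the inductive hypothesis yields $\#I \geq \#K + 1$, contradicting the assumption. This closes the induction and proves the theorem.

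The one genuinely delicate point is the step that extracts a hyperplane not appearing in the covering. It goes through uniformly whether $K$ is finite or infinite precisely because we compare $\#I$ with $\#K + 1$ rather than with $\#K$, and Lemma \ref{LEMMA1.5} guarantees at least $\#K + 1$ hyperplanes to choose from; no separate treatment of the two cases is needed. One could instead try to restrict directly to a suitably chosen $2$-dimensional subspace and quote Lemma \ref{LEMMA1}, but arranging for such a subspace to lie in none of the $W_i$ amounts to running the hyperplane argument repeatedly, so the inductive formulation is the tidier one.
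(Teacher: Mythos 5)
Your proof is correct and takes essentially the same route as the paper: induction on $\dim V$ with base case Lemma \ref{LEMMA1}, using Lemma \ref{LEMMA1.5} to produce a hyperplane $H$ distinct from every $W_i$ and then restricting the covering to $H$ to invoke the inductive hypothesis. The only cosmetic difference is that the paper first replaces each $W_i$ by a hyperplane containing it, whereas you handle arbitrary proper subspaces directly via the observation that $H \subseteq W_i$ forces $W_i = H$; the two come to the same thing.
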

\begin{proof} 
Since every proper subspace is contained in a hyperplane, it suffices to consider hyperplane coverings.  We go by induction on $d$, the case $d = 2$ being Lemma \ref{LEMMA1}.  Assume the result for $(d-1)$-dimensional spaces and, 
seeking a contradiction, that we have a linear covering $\{W_i\}_{i \in I}$ of $K^d$ with $\# I < \# K + 1$.  By 
Lemma \ref{LEMMA1.5}, there is a hyperplane $W$ such that $W \neq W_i$ for any $i \in I$.  Then $\{W_i \cap W\}_{i \in I}$ is a covering of $W \cong K^{d-1}$ by at most $\# I < \# K +1$ hyperplanes, giving a contradiction.
\end{proof}

\begin{thm}
\label{THMB}
Let $V$ be a vector space over a field $K$, and let $\{W_i\}_{i \in I}$ be an irredundant linear covering of $V$.  
Then $\# I \geq \# K + 1$.
\end{thm}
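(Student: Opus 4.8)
The plan is to use irredundance to extract, for two chosen indices, a vector lying in the corresponding subspace and in no other, and then run the classical ``pencil of vectors'' argument to inject a set of cardinality $\# K + 1$ into $I$.

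First I would observe that $\# I \geq 2$, since a single proper subspace cannot cover $V$; fix two distinct indices, say $1, 2 \in I$. Applying the definition of irredundance to the proper subsets $I \setminus \{1\}$ and $I \setminus \{2\}$ (and using that $\{W_i\}_{i\in I}$ still covers $V$) produces vectors
\[ v_1 \in W_1 \setminus \bigcup_{j \neq 1} W_j, \qquad v_2 \in W_2 \setminus \bigcup_{j \neq 2} W_j. \]
Note $v_2 \neq 0$, since $0$ lies in $W_j$ for some $j \neq 2$.

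Next I would consider the family $\{v_1 + \alpha v_2 : \alpha \in K\}$. For $\alpha \neq 0$ this vector lies in neither $W_1$ (else $\alpha v_2 \in W_1$, forcing $v_2 \in W_1$) nor $W_2$ (else $v_1 \in W_2$); for $\alpha = 0$ it equals $v_1 \in W_1$ and lies in no $W_j$ with $j \neq 1$. Hence for each $\alpha \in K$ one may choose an index $f(\alpha)$ with $v_1 + \alpha v_2 \in W_{f(\alpha)}$, and in every case $f(\alpha) \neq 2$. Moreover $f$ is injective: if $f(\alpha) = f(\beta) = i$ with $\alpha \neq \beta$, then $(\alpha - \beta) v_2 \in W_i$ forces $v_2 \in W_i$, contradicting $i \neq 2$.

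Finally, since the image of $f$ is contained in $I \setminus \{2\}$, the map $K \sqcup \{*\} \to I$ sending $\alpha \mapsto f(\alpha)$ for $\alpha \in K$ and $* \mapsto 2$ is injective, whence $\# I \geq \# K + 1$. Setting things up this way avoids any split into the finite and infinite cases. The one point requiring a little care is verifying that the pencil $\{v_1 + \alpha v_2\}$ meets the two distinguished subspaces only at its ``endpoints'' — i.e.\ that $f(\alpha) \notin \{1,2\}$ for $\alpha \neq 0$ while $f(0) = 1$ — and this is exactly where the defining property of $v_1$ and $v_2$ is used; I expect this bookkeeping, rather than any deep idea, to be the main thing to get right.
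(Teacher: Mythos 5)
Your proposal is correct and is essentially the paper's own argument: your pencil $\{v_1 + \alpha v_2 : \alpha \in K\}$ is the paper's affine line $\ell = \{tu + v : t \in K\}$, with an irredundancy witness as the direction vector, the line avoiding one distinguished subspace entirely and meeting each remaining subspace in at most one point, yielding an injection of a set of size $\# K + 1$ into $I$. The only (harmless) difference is that you invoke irredundancy at two indices to get the base point $v_1$, whereas the paper uses a single witness $u$ and takes the base point to be an arbitrary vector outside the distinguished subspace.
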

\begin{proof} 
Let $\{W_i\}_{i \in I}$ be an irredundant linear covering of $V$.  Choose one of the subspaces 
in the covering, say $W_{\bullet}$.  By irredundancy, there exists $u \in W_{\bullet} \setminus \bigcup_{i \neq \bullet} W_i$; certainly there exists $v \in V \setminus W_{\bullet}$.  Consider the affine line $\ell = \{tu + v \ |  \ t \ \in K\}$; evidently $\# \ell = \# K$.  If $w  = tu + v \in \ell \cap W_{\bullet}$, then $v = w - tu \in W_{\bullet}$, giving a contradiction.  Further, if for any $i \neq \bullet$ we had $\# (\ell \cap W_i) \geq 2$, then we would have $\ell \subset 
W_i$ and thus also the $K$-span of $\ell$ is contained in $W_i$, so $u = (u+v) - v) \in W_i$, again giving a contradiction.  
It follows that $\# \ell = \# K \leq \# (I \setminus \{\bullet \})$.
\end{proof}
\noindent
\emph{Proof of the Main Theorem}: \\ \\
Let $V$ be a $K$-vector space of dimension at least $2$. By Theorem \ref{THMB}, $\ILC(V) \geq \#K + 1$, and by Lemmas \ref{LEMMA0} and \ref{LEMMA1}, $\ILC(V) \leq \# K + 1$.  So $\ILC(V) = \#K +1$, proving part c) of the Main Theorem.  It remains to compute $\LC(V)$. 
\\ 
Case 1: Suppose $2 \leq \dim V < \aleph_0$.  By Theorem \ref{THMA} we have $\LC(V) \geq \# K + 1$, 
whereas by Lemma 1 and Lemma 2 we have $\LC(V) \leq \LC(K^2) = \# K + 1$.
\\ 
Case 2: Suppose $\dim V \geq \aleph_0$ and $K$ is finite.  Then $\LC(V) \leq \LC(K^2) = \# K + 1 < \aleph_0$.  Suppose that $V$ had a linear covering $\{W_i\}_{i=1}^n$ 
with $n < \# K + 1$.  Then, since $n$ is finite, we may obtain an irredundant subcovering simply by removing 
redundant subspaces one at a time, until we get an irredundant covering by $m$ subspaces, with $m \leq n < \#K + 1$ subspaces, 
contradicting Theorem \ref{THMB}.
\\ 
Case 3: Suppose $\dim V$ and $\# K$ are both infinite.  Consider $W = \bigoplus_{i=1}^{\infty} K$, a vector 
space of dimension $\aleph_0$.  For $n \in \Z^+$, put $W_n := \bigoplus_{i=1}^n K$.  Then $\{W_n\}_{n=1}^{\infty}$ gives a covering of $W$ of cardinality $\aleph_0$.  Since $\dim V \geq \dim W$, by Lemma 4 we have $\LC(V) \leq \aleph_0$.  Thus it 
remains to show that $V$ does not admit a finite linear covering.  But once again, if $V$ admitted a finite 
linear covering it would admit a finite irredundant linear covering, contradicting Theorem \ref{THMB}.

\section{Affine Covering Numbers}
\noindent
An \textbf{affine covering} $\{A_i\}_{i \in I}$ of a vector space $V$ is a covering by translates of proper linear subspaces.  An affine covering is irredundant if no proper subset gives a covering.  Irredundant affine coverings exist if and only if $\dim V \geq 1$.  The \textbf{affine covering number} $\AC(V)$ is the least cardinality of an affine covering, 
and similarly the \textbf{irredundant affine covering number} $\IAC(V)$ is the least cardinality of an irredundant affine 
covering.  
\begin{thm}
Let $V$ be a vector space over a field $K$, with $\dim V \geq 1$. \\
a) If $\min(\dim V, \# K)$ is finite, then $\AC(V) = \# K$. \\
b) If $\dim V$ and $\# K$ are both infinite, then $\AC(V) = \aleph_0$. \\
c) We have $\IAC(V) = \# K$.
\end{thm}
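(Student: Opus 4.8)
\noindent
\emph{Proof proposal.} The plan is to run the argument of $\S 2$ essentially verbatim, with $K$ rather than $K^2$ serving as the minimal example and the target cardinal $\# K$ rather than $\# K + 1$ throughout. Concretely, I would first establish affine analogues of the four ingredients used in the proof of the Main Theorem -- the Quotient Principle (Lemma \ref{QUOTIENTPRINCIPLE}), the low-dimensional base case (Lemma \ref{LEMMA1}), Theorem \ref{THMA}, and Theorem \ref{THMB} -- and then assemble them by the same case analysis.

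The \emph{affine Quotient Principle} (if $\dim V \geq \dim W \geq 1$ then $\AC(V) \leq \AC(W)$ and $\IAC(V) \leq \IAC(W)$) has the proof of Lemma \ref{QUOTIENTPRINCIPLE} word for word: a surjective linear $q \colon V \to W$ pulls a proper affine subspace $w_0 + W_0$ of $W$ back to the proper affine subspace $v_0 + q^{-1}(W_0)$ of $V$ (here $q^{-1}(W_0)$ is proper since $V/q^{-1}(W_0) \cong W/W_0 \neq 0$), and pullback preserves both the covering property and irredundancy. The \emph{base case} is immediate: the only proper linear subspace of $K$ is $\{0\}$, so the proper affine subspaces of $K$ are exactly the singletons, whence the unique affine covering of $K$ consists of all $\# K$ of them and is irredundant; thus $\AC(K) = \IAC(K) = \# K$. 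Together these two facts already yield the upper bounds in (a) and (c): $\AC(V) \leq \AC(K) = \# K$ and $\IAC(V) \leq \IAC(K) = \# K$ whenever $\dim V \geq 1$. For the upper bound in (b), as in Case 3 of the Main Theorem the space $K^{(\aleph_0)}$ carries a linear -- hence affine -- covering of cardinality $\aleph_0$, so the Quotient Principle gives $\AC(V) \leq \AC(K^{(\aleph_0)}) \leq \aleph_0$ when $\dim V \geq \aleph_0$.

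The two substantive analogues are those of Theorems \ref{THMA} and \ref{THMB}. For the \emph{analogue of Theorem \ref{THMA}} -- a finite-dimensional $V$ has no affine covering by fewer than $\# K$ affine hyperplanes -- I would induct on $d = \dim V$, the case $d = 1$ being the base case above; since every proper affine subspace lies in an affine hyperplane, one may assume the covering $\{A_i\}_{i \in I}$ consists of affine hyperplanes. If $\# I < \# K$ then, Lemma \ref{LEMMA1.5} providing at least $\# K + 1 > \# I$ linear hyperplanes in $V$, some linear hyperplane $W_0$ is the direction of no $A_i$; an affine hyperplane $W$ with direction $W_0$ then meets each $A_i$ either not at all or in an affine hyperplane of $W$ (its direction, the intersection of $W_0$ with the direction of $A_i$, has codimension one in $W_0$), so the nonempty intersections form an affine covering of $W \cong K^{d-1}$ by fewer than $\# K$ affine hyperplanes, contradicting the inductive hypothesis. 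For the \emph{analogue of Theorem \ref{THMB}} -- every irredundant affine covering has at least $\# K$ members -- I would imitate the proof of Theorem \ref{THMB}. Fix a member; using a point $u$ supplied by irredundancy that lies in no other member, write it as $A_\bullet = u + W_\bullet$, and choose $v \notin W_\bullet$. The affine line $\ell = \{u + tv : t \in K\}$ has $\# K$ points; it meets $A_\bullet$ only in $u$ (since $Kv \cap W_\bullet = \{0\}$) and meets every other member in at most one point (two distinct points of $\ell$ affinely span all of $\ell \ni u$, and $u$ lies in no other member). Hence the points of $\ell \setminus \{u\}$ are distributed at most one per member other than $A_\bullet$, forcing $\# I \geq \#(\ell \setminus \{u\}) + 1 \geq \# K$.

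Granting these analogues, (c) follows from $\# K \leq \IAC(V) \leq \# K$. For (a): if $\dim V$ is finite, the analogue of Theorem \ref{THMA} gives $\AC(V) \geq \# K$; if $\# K$ is finite, then any affine covering of cardinality $< \# K$ is finite and can be thinned, one redundant member at a time, to a finite irredundant subcovering, contradicting the analogue of Theorem \ref{THMB}, so again $\AC(V) \geq \# K$; in either case $\AC(V) = \# K$. For (b): the upper bound was noted above, while a finite affine covering would likewise thin to a finite irredundant one, contradicting $\IAC(V) = \# K \geq \aleph_0$; hence $\AC(V) = \aleph_0$. The one point I expect to require genuine care is the bookkeeping in the Theorem \ref{THMB} analogue: unlike in the linear case, the test line $\ell$ cannot be chosen disjoint from $A_\bullet$ but necessarily passes through $u$, and one must verify -- via $\#(\ell \setminus \{u\}) + 1 \geq \# K$, valid for every cardinal $\# K \geq 2$ -- that discarding this one point costs nothing precisely because the target is now $\# K$ rather than $\# K + 1$.
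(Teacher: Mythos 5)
Your proposal is correct and follows essentially the same route the paper indicates: affine versions of the Quotient Principle, the base case $K^1$, and Theorems \ref{THMA} and \ref{THMB}, assembled by the same case analysis as the Main Theorem. The only deviations are cosmetic -- you locate the auxiliary affine hyperplane via an unused linear direction rather than via the affine analogue of Lemma \ref{LEMMA1.5}, and you use the line $\{u + tv\}$ in place of the paper's $\{(1-t)u + tv\}$ -- and your bookkeeping of the point $u$ on $\ell$ matches what the paper's sketch requires.
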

\noindent
The proof of the Main Theorem goes through with minor modifications.  Lemma \ref{QUOTIENTPRINCIPLE} holds verbatim.  The following self-evident result is the analogue of Lemma \ref{LEMMA1}.
\begin{lemma}
\label{LEMMA1AFFINE}
For any field $K$, the unique affine covering of $K^1$ is the set of all points of $K$, of cardinality $\# K$.  It is 
an irredundant covering.
\end{lemma}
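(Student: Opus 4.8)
The plan is to pin down exactly which subsets of $K^1$ are admissible as members of an affine covering. Since $\dim K^1 = 1$, the only proper $K$-subspace of $K^1$ is the zero subspace $\{0\}$. A translate of $\{0\}$ has the form $a + \{0\} = \{a\}$ for some $a \in K$, and conversely every singleton arises this way. Hence the affine subspaces available to build a covering of $K^1$ are precisely the one-element subsets $\{a\}$, $a \in K$.

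Given this, I would argue directly. Let $\{A_i\}_{i \in I}$ be any affine covering of $K^1$. Each $A_i$ is a singleton, so $\bigcup_{i \in I} A_i = K$ forces, for every $a \in K$, that $\{a\} = A_i$ for some $i$. Thus the family must include $\{a\}$ for each $a \in K$; as a set of affine subspaces it is exactly the set of all points of $K$, of cardinality $\# K$. For irredundancy: deleting the set $\{a\}$ leaves only singletons $\{b\}$ with $b \neq a$, so $a$ is covered by none of them, and the reduced family is no longer a covering. This is the affine analogue of Lemma \ref{LEMMA1}, and it shows simultaneously that $\AC(K^1) = \IAC(K^1) = \# K$.

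There is essentially no obstacle here — the entire content is the remark that the only proper subspace of a line is the origin, so ``affine covering of $K^1$'' just means ``write $K$ as a union of points.'' The one trivial point to keep in mind is that $\# K \geq 2$ since $K$ is a field, so $K^1$ is not a single point and the covering is genuinely nontrivial; this is exactly the condition $\dim V \geq 1$ under which irredundant affine coverings were said to exist.
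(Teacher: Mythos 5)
Your argument is correct and is just the explicit version of what the paper treats as self-evident: the only proper subspace of $K^1$ is $\{0\}$, so its translates are the singletons $\{a\}$, forcing any affine covering to consist of all points of $K$, and removing any point destroys the covering. Since the paper offers this lemma without proof, your write-up matches the intended (and only) reasoning.
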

\noindent 
Combining these two results we get the analogue of Lemma \ref{LEMMA1.5}, in which $\# K + 1$ is replaced by $\# K$.  
To prove the analogue of Theorem \ref{THMA}, note that for two codimension-one affine subspaces $W_1$, $W_2$ of a vector space $V$, $W_1 \cap W_2$ is either empty or is a codimension-one affine subspace in each $W_i$.  
In the proof of the analogue of Theorem \ref{THMB} we use the line $\ell = \{(1-t) u + tv \ | \ t \in K\}$.
\\ \\
\textbf{Acknowledgments.}  This work was partially supported by National Science Foundation grant DMS-0701771.

\noindent
\emph{Department of Mathematics, University 
of Georgia, Athens, GA 30602-7403}
\\ 
\emph{pete@math.uga.edu}

\end{document}